\theoremstyle{plain}
\newtheorem{thm}{Theorem}[section]
\newtheorem{lem}[thm]{Lemma}
\newtheorem{prop}[thm]{Proposition}
\newtheorem{thmABC}{Theorem}
\newtheorem{corABC}[thmABC]{Corollary}
\theoremstyle{definition}
\newtheorem*{ackn}{Acknowledgements}
\numberwithin{equation}{section}
\newcommand{\ol}[1]{\overline{#1}}
\renewcommand{\d}{\mathrm{d}}
\title[Lower rank of direct products]{The lower rank of direct
  products of hereditarily just infinite groups}
\author{Benjamin Klopsch} \address{Mathematisches Institut der
  Heinrich-Heine-Universit\"at, Universit\"atsstr.\ 1, 40225
  D\"usseldorf, Germany}\email{klopsch@math.uni-duesseldorf.de}
\author{Matteo Vannacci} \address{Mathematisches Institut der
  Heinrich-Heine-Universit\"at, Universit\"atsstr.\ 1, 40225
  D\"usseldorf, Germany}\email{vannacci@math.uni-duesseldorf.de}
\keywords{Lower rank, just infinite group, profinite group, direct product}
\subjclass[2010]{Primary 20E18; Secondary 20F06}
\begin{document} 

\maketitle


\begin{abstract}
  We determine the lower rank of the direct product of finitely many
  hereditarily just infinite profinite groups of finite lower rank.
\end{abstract}


\section{Introduction} 

For primes~$p$, the theory of $p$-adic analytic pro-$p$ groups plays a
central part in the study of pro-$p$ groups and has interesting
applications in infinite group theory; see~\cite{MR0209286,MR1720368}.
According to a well-known algebraic characterisation, a pro-$p$ group
$G$ is $p$-adic analytic if and only if $G$ has an open subgroup of
finite rank.  Here the \emph{rank} of $G$ is defined as
\[
\mathrm{rk}(G) = \sup \{ \d(H) \mid H \le_\mathrm{o} G \},
\] 
where $\d(H)$ denotes the minimal number of topological generators of
the open subgroup $H \le_\mathrm{o} G$.  Indeed, Lubotzky and
Mann~\cite{lubotzky:powerfulpgroups} even established the following
refinement: a pro-$p$ group $G$ is $p$-adic analytic if and only if
 the \emph{upper rank}
$\overline{L}_{\d}(G) = \limsup \{\d(H) \mid H \le_\mathrm{o} G\}$
is finite, where the limit superior is taken over the net of open
subgroups ordered by reverse inclusion; moreover, in this case
$\overline{L}_{\d}(G)$ is equal to the dimension $\dim(G)$ of $G$ as a
$p$-adic manifold. 

Lubotzky and Mann also introduced, for a general
profinite group~$G$ the \emph{lower rank}
\[
\mathrm{lr}(G) = \underline{L}_{\, \d}(G) = \liminf \{\d(H)\ \vert\
H\le_\mathrm{o} G\},
\]
where again the limit inferior is taken over the net of
open subgroups ordered by reverse inclusion. They proved that the
lower rank of a compact $p$-adic analytic group coincides with the
number of generators of its associated $\mathbb{Q}_p$-Lie algebra.  By
a classical theorem of Kuranishi~\cite{MR0041145}, this implies that
the lower rank of any compact $p$-adic analytic group with semi-simple
$\mathbb{Q}_p$-Lie algebra is equal to~$2$.

Lubotzky and Shalev~\cite{MR1264349} continued the study of the lower
rank and showed, for instance, that there exist non-analytic pro-$p$
groups of finite lower rank.  Refining their techniques,
Barnea~\cite{barnea:lowerrank} established, for instance, that the
lower rank of the $\mathbb{F}_p[\![t]\!]$-analytic group
$\mathrm{SL}_2(\mathbb{F}_p[[t]])$ is equal to~$2$.  Computing the
lower ranks of profinite groups is usually rather challenging and
producing new families of groups of finite lower rank is of
considerable interest.

In this paper we are interested in the lower ranks of finite direct
products of hereditarily just infinite profinite groups.  We recall
that a profinite group $G$ is just infinite, if $G$ is
infinite and every non-trivial closed normal subgroup
$N \trianglelefteq_\mathrm{c} G$ is open in~$G$.  The group $G$ is
\emph{hereditarily just infinite} if every open subgroup
$H \le_\mathrm{o} G$ is just infinite.

There are many non-(virtually abelian) hereditarily just infinite
profinite groups of finite lower rank, including those that are
$p$-adic analytic; cf.\ \cite{MR1483894}.  However, Ershov and
Jaikin-Zapirain proved that there exist also hereditarily just
infinite pro-$p$ groups of infinite lower rank; see
\cite[Corollary~8.10]{ershov:weighteddeficiency}.  More recently, the
second author of the present paper analysed the lower rank in a family
of non-(virtually pro-$p$) hereditarily just infinite groups and
conjectured that there exist such groups of any given lower rank
in~$\mathbb{N}_{\ge 2} \cup \{ \infty \}$; see~\cite{MR3466595}.  We
establish the following theorem and corollary.

\begin{thmABC} \label{thm:directprodhji} Let $G = \prod_{i=1}^n G_i$
  be a non-trivial direct product of finitely many hereditarily just infinite
  profinite groups of finite lower rank.  Set
    \[
    d = \max \{\mathrm{lr}(G_i) \mid 1 \le i \le n \} \quad \text{and}
    \quad r = \max \{ r_p \mid p \text{ prime} \},
    \]
    where $r_p$ denotes the number of indices $i$
    such that $G_i$ is virtually-$\mathbb{Z}_p$.  Then the lower rank
    of $G$ is $\mathrm{lr}(G) = \max \{d,r\}$.
\end{thmABC}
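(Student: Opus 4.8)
The plan is to work throughout with the characterisation $\mathrm{lr}(G) = \sup_{U \le_{\mathrm o} G} \inf_{H \le_{\mathrm o} U} \d(H)$ of the lower rank as a limit inferior over the net of open subgroups, together with the elementary fact that $\mathrm{lr}$ is unchanged on passing to an open subgroup (open subgroups of $U$ are open in $G$, and the two resulting sup--inf expressions are readily seen to coincide). A structural observation underlies everything: if $H \le_{\mathrm o} G$, then, writing $K_i = \pi_i(H)$ for the projections and $L_i = H \cap G_i$ for the intersections with the direct factors, both $K_i$ and $L_i$ are open in $G_i$ (the latter because $[G_i : L_i] \le [G:H] < \infty$), and one has the sandwich $\prod_{i=1}^n L_i \le H \le \prod_{i=1}^n K_i$. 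Since $G_i$ is hereditarily just infinite, $K_i$ is again just infinite, and a short support-in-the-product computation shows $L_i \trianglelefteq K_i$. I record also the dichotomy, valid for hereditarily just infinite groups, that being virtually-$\mathbb{Z}_p$ is the same as being virtually abelian: an open abelian subgroup is itself just infinite and abelian, hence $\cong \mathbb{Z}_p$.

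For the lower bound $\mathrm{lr}(G) \ge \max\{d,r\}$ I would establish the two estimates separately. To see $\mathrm{lr}(G) \ge d$, fix an index $j$ with $\mathrm{lr}(G_j) = d$ and choose $V \le_{\mathrm o} G_j$ witnessing $\inf_{W \le_{\mathrm o} V}\d(W) = d$; then $U = \pi_j^{-1}(V)$ is open in $G$, and every open $H \le U$ satisfies $\d(H) \ge \d(\pi_j(H)) \ge d$, because $\pi_j(H)$ is an open subgroup of $V$ and a quotient cannot increase the number of generators. To see $\mathrm{lr}(G) \ge r$, let $p$ attain $r = r_p$ and relabel so that $G_1, \dots, G_r$ are virtually-$\mathbb{Z}_p$; replacing $G$ by the open subgroup $U = \mathbb{Z}_p^{\,r} \times \prod_{i>r} G_i$ (legitimate, as $\mathrm{lr}$ is an open-subgroup invariant), the projection $U \to \mathbb{Z}_p^{\,r}$ sends every open $H \le U$ onto an open, hence rank-$r$ free, $\mathbb{Z}_p$-submodule of $\mathbb{Z}_p^{\,r}$. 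Thus $H$ surjects onto $\mathbb{Z}_p^{\,r}$ and $\d(H) \ge \d(\mathbb{Z}_p^{\,r}) = r$. Taking the better of the two open subgroups $U$ yields $\mathrm{lr}(G) \ge \max\{d,r\}$.

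The substance of the theorem is the upper bound $\mathrm{lr}(G) \le m$ with $m = \max\{d,r\}$, for which I would construct a cofinal descending chain of open subgroups $H$ of $G$ with $\d(H) \le m$. The criterion $\d(H) = \sup\{\d(H/N) \mid N \trianglelefteq_{\mathrm o} H\}$ reduces this to bounding, for every finite quotient of $H$, the number of generators in terms of its chief factors: an abelian chief factor contributes through the size of the isomorphic \emph{crown} it sits in, whereas nonabelian chief factors are cheap to generate. The idea is to build $H$ as a fibre product of small open subgroups $A_i \le_{\mathrm o} G_i$, each with $\d(A_i) \le d$ (available since $\mathrm{lr}(G_i) \le d$), amalgamated over a common finite quotient so that the chief factors coming from distinct non-(virtually abelian) factors are \emph{merged} rather than stacked into large crowns. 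The virtually-$\mathbb{Z}_p$ factors, which after the reduction above we may take to be literal copies of $\mathbb{Z}_p$, are unmergeable and contribute exactly central subgroups $\cong \mathbb{Z}_p$, that is, trivial-module crowns of total rank $r_p \le r$. The decisive input is again the dichotomy above: a hereditarily just infinite group that is not virtually-$\mathbb{Z}_p$ is not virtually abelian, and hence admits arbitrarily small open subgroups whose abelianisation has trivial free $\mathbb{Z}_p$-part at every prime $p$; this is what allows the fibre-product amalgamation to keep every trivial-$\mathbb{F}_p$-module crown bounded by $r$ and every remaining crown bounded by $d$.

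I expect the main obstacle to be precisely this simultaneous, prime-by-prime control of the fibre products. One must select the open subgroups $A_i$ and the amalgamating quotient so that, at all primes at once, the non-abelian factors genuinely merge with no residual abelian accumulation, while retaining $\d(A_i) \le d$, and then verify the crown bound $\d(H) \le \max\{d,r\}$ uniformly over all finite quotients of the resulting $H$. Arranging in addition that the chain be cofinal, so that the intersection of the $H$'s is trivial, without sacrificing these bounds is the delicate point, and it is exactly here that the non-virtual-abelianness of the relevant factors must be invoked quantitatively.
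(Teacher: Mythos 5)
Your lower bound argument is correct and complete (indeed more detailed than the paper, which dismisses this direction with ``Clearly $\mathrm{lr}(G)\ge\ell$''), and your structural reductions (openness of projections and intersections, the dichotomy that a virtually abelian hereditarily just infinite group is virtually-$\mathbb{Z}_p$) are sound. But the upper bound --- which is the entire substance of the theorem --- is left as a plan rather than a proof, and you say so yourself: the ``simultaneous, prime-by-prime control of the fibre products,'' the verification of the crown bound $\d(H)\le\max\{d,r\}$ over all finite quotients, and the cofinality of the resulting chain are exactly the points you defer. The hardest test case makes the gap concrete: take $G = S\times S$ with $S$ a non-(virtually abelian) hereditarily just infinite pro-$p$ group of lower rank $d$. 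Here the full direct product of any two open subgroups needs $2d$ generators by a Frattini argument, so your chain must consist of genuinely proper subdirect (fibre-product) subgroups, and the ``merging'' of chief factors of the two isomorphic factors along an arbitrarily deep chain is precisely what would need to be proved; nothing in the proposal does this, and it is not routine --- abelian non-central chief factors inside the amalgamation kernel appear twice in the fibre product and threaten to double the relevant crowns.

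It is worth contrasting this with the mechanism the paper actually uses, which is in a sense opposite to merging over common quotients. By a theorem of Reid (Theorem~E of \cite{MR2684140}), a non-(virtually abelian) just infinite profinite group has no proper open subgroup isomorphic to itself; hence inside any neighbourhood one can choose $d$-generated open subgroups $H_i\le_\mathrm{o} G_i$ that are \emph{pairwise non-isomorphic} (Lemma~\ref{lem:noniso}). One then takes $K$ to be the closed subgroup generated by $\ell=\max\{d,r\}$ diagonal-type elements $g_i=h_{1i}h_{2i}\cdots y_{1i}\cdots$; this $K$ is $\ell$-generated by construction, so no upper bound on $\d$ of an open subgroup ever needs to be computed. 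Instead, all the work goes into Proposition~\ref{prop:subdirect}: a closed subdirect product of pairwise non-isomorphic finitely generated just infinite non-(virtually abelian) groups and a finitely generated abelian group is automatically \emph{open}, proved via profinite presentations and a commutator argument using Lemma~\ref{lem:virtuallyabelian}. Your proposal contains neither the non-isomorphism device nor any substitute for it, so the case of isomorphic (or commensurable) factors --- the case the theorem is really about --- remains unproven.
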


\begin{corABC}
  The non-trivial direct product $G = \prod_{i=1}^n G_i$ of finitely many pairwise
  non-commensurable hereditarily just infinite profinite groups of
  finite lower rank has lower
  rank $$\mathrm{lr}(G) = \max \{\mathrm{lr}(G_i) \mid 1 \le i \le n\}.$$
\end{corABC}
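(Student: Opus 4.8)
The plan is to derive the corollary as a short deduction from Theorem~\ref{thm:directprodhji}. That theorem already computes $\mathrm{lr}(G) = \max\{d,r\}$ with $d = \max_i \mathrm{lr}(G_i)$ and $r = \max_p r_p$, so under the additional hypothesis that the factors are pairwise non-commensurable it suffices to show that the parameter $r$ cannot dominate, i.e.\ that $r \le d$, forcing the maximum to collapse onto $d = \max_i \mathrm{lr}(G_i)$.

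The key observation is that being virtually-$\mathbb{Z}_p$ is controlled by the commensurability class. A profinite group is virtually-$\mathbb{Z}_p$ precisely when it possesses an open subgroup isomorphic to $\mathbb{Z}_p$. Hence, if two factors $G_i$ and $G_j$ were both virtually-$\mathbb{Z}_p$ for one and the same prime~$p$, each would contain an open subgroup isomorphic to $\mathbb{Z}_p$, and these open subgroups would be isomorphic to one another; thus $G_i$ and $G_j$ would be commensurable, contrary to hypothesis. (Here one also notes that a given factor can be virtually-$\mathbb{Z}_p$ for at most one prime, since an open copy of $\mathbb{Z}_p$ is pro-$p$ and determines~$p$.) Consequently, for every prime~$p$ at most one index $i$ yields a virtually-$\mathbb{Z}_p$ factor, so $r_p \le 1$ for all $p$ and therefore $r \le 1$.

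Finally, I would record that $d \ge 1$: each factor $G_i$ is just infinite, hence infinite, so every open subgroup of $G_i$ is itself infinite and in particular non-trivial, whence $\d(H) \ge 1$ for every $H \le_\mathrm{o} G_i$ and $\mathrm{lr}(G_i) \ge 1$. Combining the two bounds gives $r \le 1 \le d$, so $\max\{d,r\} = d = \max_i \mathrm{lr}(G_i)$, and Theorem~\ref{thm:directprodhji} yields the stated value of $\mathrm{lr}(G)$. I do not expect any genuine obstacle here: given Theorem~\ref{thm:directprodhji} the argument is essentially bookkeeping, and the only substantive point is the commensurability invariance of the virtually-$\mathbb{Z}_p$ condition, which pins $r$ down to at most~$1$.
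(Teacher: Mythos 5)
Your proposal is correct and matches the paper's (implicit) intended argument: the corollary is deduced directly from Theorem~\ref{thm:directprodhji} by noting that pairwise non-commensurability forces $r_p \le 1$ for every prime $p$ (two factors with open subgroups isomorphic to $\mathbb{Z}_p$ would be commensurable), while $d \ge 1$ since just infinite groups are infinite, so $\max\{d,r\} = d$. The paper states the corollary without a written proof precisely because this bookkeeping is the whole content, and you have supplied it accurately.
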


In the proof we use basic facts about the structure of just infinite
profinite groups, in particular a result of Reid~\cite{MR2684140}.  
We emphasise that, in general, the lower rank of a direct product of
profinite groups can be as large as the sum of the lower ranks of the
factors. For instance, the lower rank of a free abelian pro-$p$ group
$\mathbb{Z}_p \times \ldots \times \mathbb{Z}_p$, with $n$ factors, is
clearly~$n$.

Theorem~\ref{thm:directprodhji} can be regarded as a generalisation of
the aforementioned result, due to Kuranishi, Lubotzky and Mann, that
every compact $p$\nobreakdash-adic analytic group with semi-simple
$\mathbb{Q}_p$-Lie algebra has lower rank~$2$.  Finally, we remark
that Theorem~\ref{thm:directprodhji} can be applied to the family of
hereditarily just infinite profinite groups of finite lower rank
described in~\cite{MR3466595}.  In this way we obtain many new
examples of profinite groups of finite lower rank.


\section{Preliminaries}\label{sec:directprodhji}

Clearly, profinite groups of finite lower rank are finitely generated
and thus countably based.  Restricting to the latter class of
profinite groups, we can navigate around the general notion of the
limit inferior of a net.  The \emph{lower rank} of a
countably based profinite group $G$ is
\begin{multline*}
  \mathrm{lr}(G) = \min \Big\{ \sup \big\{ \inf\{\d(H_i)\mid i\ge N\}
  \mid N\in \mathbb{N} \big\} \mid (H_i)_{i\in \mathbb{N}} \in
  \mathcal{C}(G)
  \Big\} \\
  \in \mathbb{N}_0 \cup \{ \infty \},
\end{multline*}
where
\[
\mathcal{C}(G) = \Big\{ (H_i)_{i\in \mathbb{N}} \mid G=H_1
\prescript{}{\mathrm{o}}\ge\, H_2 \prescript{}{\mathrm{o}}\ge\, \ldots
\text{ and } \bigcap\nolimits_{i\in \mathbb{N}} H_i = 1 \Big\}
\]
is the collection of all descending chains of open subgroups of $G$ that form
a neighbourhood base for the identity element. 

In other words, a countably based profinite group $G$ has lower rank at
most $r$ if there exists a descending chain of $r$-generated open
subgroups of $G$ that form a neighbourhood base for the identity.

\smallskip

In preparation for the proof of Theorem~\ref{thm:directprodhji} we
collect two basic lemmata.  Recall that a profinite group $G$
possesses \emph{virtually} a group-theoretic property $\mathfrak{P}$
if $G$ has an open subgroup $H$ that has $\mathfrak{P}$. 
  We abbreviate ``virtually-(infinite procyclic pro-$p$)'' to
  ``virtually-$\mathbb{Z}_p$''.
 
\begin{lem}\label{lem:virtabhji}
  Let $G$ be a virtually abelian, hereditarily just infinite profinite
  group. Then $G$ is virtually-$\mathbb{Z}_p$ for a suitable prime~$p$.
\end{lem}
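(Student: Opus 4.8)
The plan is to work with an open abelian subgroup $H \le_\mathrm{o} G$, which exists by hypothesis and is itself just infinite because $G$ is hereditarily just infinite. Since $H$ is abelian, every closed subgroup of $H$ is normal, so the just infinite condition reduces to the clean dichotomy that every nontrivial closed subgroup of $H$ is open. I would extract the structure of $H$ from this dichotomy in four steps and then transfer the conclusion back to $G$.

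First I would show $H$ is torsion-free: any element $x \neq 1$ of finite order generates a finite, hence non-open, nontrivial closed subgroup, contradicting the dichotomy. Second, writing $H = \prod_p H_p$ as the product of its Sylow pro-$p$ subgroups, I would argue that only one factor can be nontrivial, for if $H_p \neq 1$ and $H_q \neq 1$ for distinct primes $p \neq q$, then $H_p$ is a nontrivial closed subgroup whose quotient $H/H_p$ contains a copy of $H_q$ and is therefore infinite, so $H_p$ is not open, again a contradiction. Thus $H$ is a torsion-free abelian pro-$p$ group for a single prime $p$.

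The key step is to deduce that $H$ is finitely generated. Here I would examine the Frattini subgroup $\Phi(H) = \overline{H^p}$, the closure of the set of $p$-th powers, which coincides with the Frattini subgroup because $H$ is abelian. As $H$ is a nontrivial pro-$p$ group, $\Phi(H)$ is proper; as $H$ is torsion-free and nontrivial, $\Phi(H)$ is nontrivial. The dichotomy then forces $\Phi(H)$ to be open, so $H/\Phi(H)$ is a finite elementary abelian $p$-group and hence $\d(H) = \dim_{\mathbb{F}_p} H/\Phi(H) < \infty$. The structure theorem for finitely generated torsion-free abelian pro-$p$ groups now yields $H \cong \mathbb{Z}_p^{\,a}$ for some $a \ge 1$.

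Finally I would rule out $a \ge 2$: in $\mathbb{Z}_p^{\,a}$ a single coordinate copy of $\mathbb{Z}_p$ is a nontrivial closed subgroup of infinite index, hence not open, contradicting the dichotomy once more. Therefore $a = 1$ and $H \cong \mathbb{Z}_p$ is infinite procyclic pro-$p$; since $H$ is open in $G$, this shows that $G$ is virtually-$\mathbb{Z}_p$, as required. I expect the only genuine obstacle to be the finite generation step, since everything else is an immediate application of the nontrivial-closed-implies-open dichotomy, whereas passing from just infiniteness to finite generation rests on the slightly less obvious observation that the Frattini subgroup is simultaneously proper and nontrivial.
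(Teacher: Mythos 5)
Your proof is correct and takes essentially the same route as the paper: pass to an open abelian subgroup, note that it is just infinite because $G$ is hereditarily just infinite, and conclude that it must be isomorphic to $\mathbb{Z}_p$. The paper simply asserts in one line that an abelian just infinite profinite group is pro-$p$ and infinite procyclic, whereas you prove this classification from scratch (torsion-freeness, a single Sylow factor, finite generation via the Frattini subgroup, and rank one), and every step of that verification is sound.
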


\begin{proof}
  Let $A$ be an open abelian subgroup of~$G$.  As $A$ is just
  infinite, it is a pro-$p$ group for some prime~$p$ and infinite
  pro-cyclic.  Thus $A$ is isomorphic to $\mathbb{Z}_p$.
\end{proof}

\begin{lem}\label{lem:virtuallyabelian}
  Let $H$ be a just infinite profinite group that is not virtually
  abelian, and let $L \le_\mathrm{o} H$. Then there exists $x \in H$
  such that $L$ is not contained in $\mathrm{C}_H(x)$.
\end{lem}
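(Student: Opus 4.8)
The plan is to recast the desired conclusion in terms of the centre $\mathrm{Z}(H)$ of $H$. Observe that the negation of the statement---namely ``$L \subseteq \mathrm{C}_H(x)$ for every $x \in H$''---says precisely that each element of $L$ commutes with each element of $H$, that is, $L \subseteq \mathrm{Z}(H)$. Thus it suffices to prove that $L \not\subseteq \mathrm{Z}(H)$, for then some $\ell \in L$ fails to lie in $\mathrm{Z}(H)$, whence $\ell \notin \mathrm{C}_H(x)$ for a suitable $x \in H$ and therefore $L \not\subseteq \mathrm{C}_H(x)$.

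To establish $L \not\subseteq \mathrm{Z}(H)$, I would exploit that $\mathrm{Z}(H)$ is a closed normal subgroup of the profinite group $H$. Since $H$ is just infinite, the subgroup $\mathrm{Z}(H)$ is accordingly either trivial or open in~$H$. Were $\mathrm{Z}(H)$ open, then $H$ would contain an open abelian subgroup and hence be virtually abelian, contradicting the hypothesis. Consequently $\mathrm{Z}(H) = 1$.

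It remains to note that $H$, being just infinite, is infinite, so the open subgroup $L$ has finite index and is itself infinite; in particular $L \neq 1 = \mathrm{Z}(H)$, whence $L \not\subseteq \mathrm{Z}(H)$, as required. Once the problem has been translated into the assertion $L \not\subseteq \mathrm{Z}(H)$, there is essentially no obstacle: the only place where the full strength of the hypotheses enters is the trivial-or-open dichotomy for $\mathrm{Z}(H)$ furnished by just-infiniteness, combined with the observation that an open centre would force $H$ into the excluded virtually abelian case.
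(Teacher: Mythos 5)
Your proof is correct and follows essentially the same route as the paper's: reduce the claim to $L \not\subseteq \mathrm{Z}(H)$, note that just-infiniteness forces $\mathrm{Z}(H)$ to be trivial or open, rule out the open case via the non-(virtually abelian) hypothesis, and conclude since the open subgroup $L$ of the infinite group $H$ cannot be trivial. The paper's version merely compresses these steps, asserting $\mathrm{Z}(H)=1$ without spelling out the dichotomy you make explicit.
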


\begin{proof}
  For a contradiction, assume that $L \subseteq \mathrm{C}_H(x)$ for
  all $x \in H$.  Then $L$ is contained in the centre~$\mathrm{Z}(H) =
  1$, hence $L = 1$. 
\end{proof}


\section{Proof of Theorem~\ref{thm:directprodhji}}

Before proving Theorem~\ref{thm:directprodhji} we establish another
auxiliary result.

\begin{lem}\label{lem:noniso}
  Let $n\in \mathbb{N}$ and let $G = \prod_{i=1}^n G_i$ be a direct product of finitely many
  hereditarily just infinite profinite groups of finite lower rank,
  where none of them is virtually abelian, and define the integer
  $d = \max \{\mathrm{lr}(G_i) \mid 1 \le i \le n \}$. Then for every
  basic open neighbourhood $\prod_{i=1}^n U_i$ of the identity element
  in $G$, with $U_i \subseteq_\mathrm{o} G_i$, there exist open
  subgroups $H_i \le_\mathrm{o} G_i$, for $1 \le i \le n$, such that
  \begin{enumerate}
  \item $H_i \subseteq U_i$ and $\d(H_i) \le d$ for $1\le i\le n$,
  \item $H_i \not\cong H_j$ for $1\le i < j \le n$.
  \end{enumerate}
\end{lem}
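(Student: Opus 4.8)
The plan is to treat the two conditions separately, dispatching (1) directly from the definition of lower rank and reducing the whole statement to an abundance-of-isomorphism-types assertion that feeds an induction on the factors.

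First I would establish (1). Since $\mathrm{lr}(G_i)\le d$, the group $G_i$ carries a descending chain of $\le d$-generated open subgroups forming a neighbourhood base of the identity; as its terms shrink to $1$ they eventually lie inside $U_i$, so there is a $d$-generated open subgroup $H_i\le_\mathrm{o}G_i$ with $H_i\subseteq U_i$. Because $G_i$ is hereditarily just infinite, $H_i$ is itself just infinite, and since $G_i$ is not virtually abelian neither is $H_i$ (an open abelian subgroup of $H_i$ would be open in $G_i$). Thus every subgroup produced in this way is automatically just infinite and not virtually abelian, which is exactly the hypothesis the later generation argument will need.

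For (2) I would argue one factor at a time. Assuming $H_1,\dots,H_{i-1}$ have already been chosen, these represent only finitely many isomorphism types, so it suffices to prove the following local statement: \emph{inside each $U_i$ there are infinitely many isomorphism types of $d$-generated open subgroups of $G_i$.} Granting this, at the $i$-th step I select a $d$-generated open $H_i\subseteq U_i$ whose isomorphism type avoids those of $H_1,\dots,H_{i-1}$, and pairwise non-isomorphism follows by induction. I stress that this must be proved for a \emph{single} group, since several of the $G_i$ may be isomorphic and we must still separate their chosen subgroups. To attack the local statement I would fix a chain $U_i=L_1\ge_\mathrm{o}L_2\ge_\mathrm{o}\cdots$ of $d$-generated open subgroups with $\bigcap_k L_k=1$. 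As $G_i$ is finitely generated it has only finitely many open subgroups of each index, so the indices $[U_i:L_k]$ are unbounded (otherwise a single subgroup would recur cofinally, contradicting the trivial intersection). If only finitely many isomorphism types occurred among the $L_k$, then pigeonholing gives one type $F$ realised by infinitely many $L_k$ of strictly increasing index; picking two of them exhibits a proper open subgroup $F'<_\mathrm{o}F$ with $F'\cong F$, and iterating yields an infinite self-similar chain $F=F_0\supsetneq F_1\supsetneq\cdots$ with each $F_k\cong F$.

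The hard part will be to rule out such a self-similar configuration for a just infinite, non-virtually-abelian group of finite lower rank. Here I would press just-infiniteness into service: the normal core $K=\mathrm{core}_F(F')$ is an open normal subgroup with $F/K$ of bounded order (it embeds into the fixed symmetric group $S_t$ with $t=[F:F']$), and transporting $K$ along the isomorphisms $F_k\cong F$ produces open subgroups whose indices in $F$ grow without bound while the associated finite quotients remain isomorphic of bounded order. I expect this tension to be incompatible with finite lower rank, and I would try to force the contradiction using the structural analysis of just infinite profinite groups due to Reid~\cite{MR2684140}; controlling precisely this interplay between the self-similarity and the bounded generation of deep open subgroups is the delicate point where the real work lies, while the bookkeeping in the inductive selection of the $H_i$ is routine.
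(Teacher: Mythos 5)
Your overall architecture coincides with the paper's proof: condition (1) comes straight from the definition of lower rank, and condition (2) is reduced to showing that along a strictly descending chain of $d$-generated open subgroups with trivial intersection the isomorphism types cannot repeat, after which picking the $H_i$ one factor at a time while avoiding finitely many types is routine. Your pigeonhole step is also correct: a repetition of types in such a chain produces a proper open subgroup $F'<_\mathrm{o}F$ with $F'\cong F$, where $F$ is just infinite and not virtually abelian (being open in a hereditarily just infinite, non-(virtually abelian) group). So everything hinges on ruling out this self-similar configuration.

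That is exactly where your proposal has a genuine gap: you do not prove this key fact, and the mechanism you sketch for it cannot work. The statement you need -- a non-(virtually abelian) just infinite profinite group contains no proper open subgroup isomorphic to the whole group -- is precisely Theorem~E of Reid \cite{MR2684140}, which the paper invokes verbatim; it is a nontrivial structural theorem, not something one forces in a few lines. Your sketched contradiction (normal core $K=\mathrm{core}_F(F')$, transported along the isomorphisms $F_k\cong F$, giving subgroups of unbounded index in $F$ with bounded isomorphic quotients) fails on two counts. First, the transported cores $K_k$ are normal only in $F_k$, not in $F$, so just-infiniteness of $F$ says nothing about them. Second, and decisively, every ingredient you use -- just infinite, finite lower rank, a self-similar chain $F=F_0\supsetneq F_1\supsetneq\cdots$ with $F_k\cong F$, cores of bounded quotient order -- is realised in $F=\mathbb{Z}_p$ (take $F_k=p^k\mathbb{Z}_p$, $K_k=p^{k+1}\mathbb{Z}_p$): no contradiction can follow from these hypotheses alone. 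The exclusion of virtually abelian groups must enter the argument in an essential way, and your sketch never uses it; that is the content of Reid's theorem, and finiteness of the lower rank (which $\mathbb{Z}_p$ also enjoys) is a red herring. With the citation of \cite[Theorem~E]{MR2684140} inserted at this point, your proof closes and becomes essentially the paper's proof.
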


\begin{proof}
  For each $i \in \{1, \ldots, n\}$, the group $G_i$ admits a
  descending chain of open $d$-generated subgroups
  $H_{i,1} \gneqq H_{i,2} \gneqq \ldots$ satisfying $H_{i,k} \subseteq U_i$
  for $k \in \mathbb{N}$.  By \cite[Theorem~E]{MR2684140}, a
  non-(virtually abelian) just infinite profinite group does not
  contain any proper open subgroups isomorphic to the whole
  group. Hence, for $1 \leq i \leq n$, the groups $H_{i,k}$,
  $k \in \mathbb{N}$, are pairwise non-isomorphic.  Consequently there
  are $k_1, \ldots, k_n \in \mathbb{N}$ such that
  $H_1 = H_{1,k_1}, \ldots, H_n = H_{n,k_n}$ are pairwise
  non-isomorphic.
\end{proof}

Now, let $G = \prod_{i=1}^n G_i$ be a direct product of finitely many
hereditarily just infinite profinite groups of finite lower rank.  
We set
\[
\ell = \max \{ d,r \},
\]
where $d = \max \{\mathrm{lr}(G_i) \mid 1 \le i \le n \}$ and
$r = \max \{ r_p \mid p \text{ prime} \}$ are defined as in the
statement of Theorem~\ref{thm:directprodhji}; here $r_p$ denotes the
number of $i \in \{1, \ldots, n\}$ such that $G_i$ is
virtually-$\mathbb{Z}_p$.  

Clearly $\mathrm{lr}(G)\ge \ell$.
Let $U = \prod_{i=1}^n U_i$ be a basic open neighbourhood of the
identity element in $G$, with $U_i \subseteq_\mathrm{o} G_i$.  We need
to find an $\ell$-generated open subgroup $K \le_\mathrm{o} G$ with
$K \subseteq U$.

Without loss of generality, the first $m$ factors $G_1$, \ldots, $G_m$
are not virtually abelian, while the remaining $n-m$ factors
$G_{m+1}$, \ldots, $G_n$ are virtually abelian.  By
Lemma~\ref{lem:virtabhji}, there exists, for each
$i \in \{m+1, \ldots, n\}$, a prime $q_i$ such that $G_i$ is
virtually-$\mathbb{Z}_{q_i}$.  Reordering the factors and descending
to an appropriate open subgroup $C \le_\mathrm{o} \prod_{i=m+1}^n G_i$
we can arrange that $C \subseteq \prod_{i=m+1}^n U_i$ and
\[
C = C_1 \times \ldots \times C_s, \quad \text{with } C_i =
\overline{\langle y_{i1}, \ldots, y_{i  t_i} \rangle}
\cong \mathbb{Z}_{p_i}^{\, t_i} \text{ for $1 \le i \le s$},
\]
where $s \in \mathbb{N}\cup \{0\}$ with $s \le n-m$, the positive
integers $t_i = r_{p_i}$
satisfy $\sum_{i=1}^s t_i = n-m$ and $p_1, \ldots, p_s$ denote
distinct primes.  It is convenient to set $y_{ij} = 1$ for
$1 \leq i \leq s$ and $t_i +1 \le j \le \ell$ as well as for
$s+1 \le i \le n$ and $1 \leq j \leq \ell$. We now work in the open
subgroup $\prod_{i=1}^m G_i \times C \le_\mathrm{o} G$.

By Lemma~\ref{lem:noniso}, we can choose subgroups
$H_i \le_\mathrm{o} G_i$ with generators $h_{i1},\ldots,h_{id}$, for
$1 \le i \le m$, such that
\[
H_i = \ol{\langle h_{i1},\ldots,h_{id} \rangle} \subseteq U_i \qquad
\text{and} \qquad H_i \not\cong H_j \quad \text{for
  $1 \leq i < j \le m$}.
\]
Again it is convenient to set $h_{ij} =1$ for $1 \le i \le m$ and
$d+1 \le j \le \ell$ as well as for $m+1 \le i \le n$ and
$1 \le j \le \ell$.  We write $H=\prod_{i=1}^m H_i$ for the internal
direct product of $H_1,\ldots,H_m$.  

To conclude the proof, it suffices to produce a $\ell$-generated open
subgroup $K \le_\mathrm{o} H \times C \le_\mathrm{o} G$.  We consider
\[
K = \ol{\langle g_1, g_2, \ldots, g_ \ell \rangle}
\le_\mathrm{c} H_1 \times \cdots \times H_m \times C = H \times C,
\]
where $g_i = h_i y_i$ with 
\[
h_i = h_{1i} h_{2i} \cdots h_{ni} \in H \quad \text{and} \quad y_i = y_{1i}
y_{2i} \cdots y_{ni} \in C \quad \text{for $1 \le i \le \ell$.}
\]
Clearly, $K$ is $\ell$-generated.  Furthermore, $K$ is a sub-direct
product of $H_1$, \ldots, $H_m$ and $C$, i.e., it is a closed subgroup
of $H_1 \times \ldots \times H_m \times C$ that projects onto each of
the $m+1$ direct factors.  The proof of
Theorem~\ref{thm:directprodhji} can therefore be completed by
appealing to the next proposition, which is also of independent
interest.

\begin{prop}\label{prop:subdirect}
  Let $m \in \mathbb{N}_0$, and let
  \[
  K \le_\mathrm{c} H_1 \times \ldots \times H_m \times
  C
  \]
  be a sub-direct product of $m$ pairwise non-isomorphic finitely
  generated just infinite profinite groups $H_1, \ldots, H_m$ that are
  not virtually abelian and a finitely generated abelian profinite
  group~$C$. 

  Then $K$ is an open subgroup of $\prod_{i=1}^m H_i \times C$.
\end{prop}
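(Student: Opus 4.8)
The plan is to argue by induction on the number $m$ of non-abelian factors, the base case $m=0$ being immediate since a subgroup of $C$ alone that projects onto $C$ must equal $C$. The reduction rests on understanding the intersections $N_i := K\cap H_i$ with the individual factors. Each $H_i$ is a direct factor, hence normal in the ambient product, so $N_i$ is normalised by $H_i=\pi_i(K)$ and therefore $N_i\trianglelefteq H_i$; by just-infiniteness $N_i$ is trivial or open. Projecting away the last factor via $\rho\colon \prod_{i=1}^m H_i\times C\to \prod_{i=1}^{m-1}H_i\times C$, the image $\rho(K)$ is again subdirect and hence open by the inductive hypothesis, while $\ker(\rho|_K)=N_m$. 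If $N_m$ is open, then $K$ has finite index in the open subgroup $\rho^{-1}(\rho(K))$ and is thus open. So the whole difficulty is to exclude the case $N_m=1$, which I expect to be the main obstacle.

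Suppose, for contradiction, that $N_m=1$. Then $\rho|_K$ is injective and identifies $K$ with an open subgroup $L:=\rho(K)\le_{\mathrm{o}}\prod_{i<m}H_i\times C$, while $\pi_m|_K$ realises $H_m\cong L/M_m$ for the normal subgroup $M_m:=K\cap\bigl(\prod_{i<m}H_i\times C\bigr)\trianglelefteq L$. The aim is to show that this forces $H_m$ to be isomorphic to some $H_{i}$ with $i<m$, contradicting pairwise non-isomorphism. Writing $V_i:=L\cap H_i$ (open in $H_i$) and $W:=L\cap C$, each $V_i$ and $W$ is normal in $L$ because the corresponding factor is normal in the product, so their images $\ol{V_i}$ in $L/M_m\cong H_m$ are normal, and pairwise commuting, since distinct $V_i$ lie in distinct direct factors.

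The heart of the argument is to pin down which images survive. The abelian factor is disposed of first: $W$ is central in $L$, so its image is central in $H_m$, and a non-(virtually abelian) just infinite group has trivial centre (the centre is normal and abelian, hence trivial or open, and openness would make the group virtually abelian); thus $W\subseteq M_m$. For the $V_i$ I intend to use the elementary fact that such a group $H$ cannot contain two non-trivial commuting open normal subgroups $A,B$: otherwise $A\cap B$ would be an open subgroup centralising the open subgroup $AB$, forcing $AB$, and hence $H$, to be virtually abelian. Since the $\ol{V_i}$ are pairwise commuting normal subgroups of $H_m$, each trivial or open, at most one is non-trivial; and as $\prod_{i<m}V_i\times W$ is open in $\prod_{i<m}H_i\times C$, its image has finite index in $H_m$ and is therefore open, so exactly one image $\ol{V_{i_1}}$ is open and the remaining $\ol{V_i}$ are trivial. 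Consequently $M_m$ contains $L\cap P''$, where $P''=\prod_{i\ne i_1,\,i<m}H_i\times C$, because this intersection maps to a finite normal subgroup of the just infinite group $H_m$, which must be trivial.

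The final, and decisive, step is to extract a genuine isomorphism rather than a mere quotient map, and this is where subdirectness is indispensable. Let $q\colon \prod_{i<m}H_i\times C\to H_{i_1}$ be the projection, with kernel $P''$. Since $K$ is subdirect, $q(L)=\pi_{i_1}(K)=H_{i_1}$, so $q$ restricts to a surjection $H_{i_1}\cong L/(L\cap P'')\twoheadrightarrow L/M_m\cong H_m$ whose kernel $M_m/(L\cap P'')$ is normal in $H_{i_1}$. By just-infiniteness this kernel is trivial or open; openness would make $H_m$ finite, which is absurd, so it is trivial and $H_{i_1}\cong H_m$, contradicting the hypothesis that the factors are pairwise non-isomorphic. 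This rules out $N_m=1$, completes the inductive step, and shows that $K$ is open. The delicate point is precisely the passage from a surjection to an isomorphism in the case $N_m=1$: one must combine the absence of commuting open normal subgroups, to isolate a single factor $H_{i_1}$, with the subdirectness of $K$, to see that the relevant projection of $L$ is the whole of $H_{i_1}$.
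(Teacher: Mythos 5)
Your proof is correct, and it takes a genuinely different route from the paper's. The paper reduces the proposition to producing, for each $j$, an open subgroup $K_j \le_\mathrm{o} H_j$ contained in $K$; these are built recursively by a word argument: one fixes profinite presentations $F_d/R_i \cong H_i$ over a free profinite group, uses the non-isomorphism of the just infinite factors to force $R_i \not\subseteq R_1$ (so that $R_i\varphi_1$ is open and normal in $H_1$), and invokes the centralizer lemma (Lemma~\ref{lem:virtuallyabelian}) to find an element $x_{i-1}$ for which a suitable commutator $z_i=[w_i(h_1^\ast,\ldots,h_d^\ast),x_{i-1}]\in K$ lies in $H_1\times H_{i+1}\times\cdots\times H_m$ with non-trivial first coordinate; finally the central factor $C$ is absorbed by raising generators to the power $N=\lvert H:\widetilde{K}\rvert!$. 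You instead run a Goursat--Remak-style induction on $m$: either $N_m=K\cap H_m$ is open, and openness of $K$ follows from openness of $\rho(K)$ (by the inductive hypothesis) together with finiteness of $\lvert KH_m:K\rvert$; or $N_m=1$, in which case $K$ is topologically isomorphic to the open subgroup $L=\rho(K)$ and $H_m\cong L/M_m$, and your analysis of the images of $V_i=L\cap H_i$ and $W=L\cap C$ in this quotient (trivial centre, no two non-trivial commuting closed normal subgroups, no non-trivial finite normal subgroups in a just infinite group that is not virtually abelian) isolates a single index $i_1$ with $L\cap P''\subseteq M_m$, whence subdirectness, via $q(L)=\pi_{i_1}(K)=H_{i_1}$, gives a surjection $H_{i_1}\twoheadrightarrow H_m$ with trivial-or-open kernel and so $H_{i_1}\cong H_m$, a contradiction. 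All the steps check out, including the closedness (by compactness) of the various images needed to apply just-infiniteness, and the edge case $m=1$, where the case $N_1=1$ collapses immediately because $H_1$ would then be a quotient of a subgroup of the abelian group $C$. Comparing the two: your argument is more elementary, avoiding free profinite groups and presentations entirely, and it nowhere uses finite generation of the $H_i$ or of $C$, so it actually proves a slightly more general statement; the paper's construction, on the other hand, produces explicit open subgroups $K_j\le_\mathrm{o} H_j$ inside $K$, which is finer structural information than openness alone, and its word-theoretic mechanism (non-isomorphic just infinite quotients of a free profinite group have incomparable kernels) is of independent interest.
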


\begin{proof}
  We may assume that $m\ge 1$.  Put $H = \prod_{i=1}^m H_i$. For each
  $j \in \{1,\ldots,m\}$, let $\pi_j \colon H \times C \to H_j$ denote
  the projection onto $H_j$; and let $\pi_C \colon H \times C \to C$
  denote the projection onto~$C$.  Fix finitely many generators
  $h_1,\ldots,h_d$ for $H_1$ and $y_1,\ldots,y_s$ for $C$ so that
  \[
  H_1=\overline{\langle h_1,\ldots,h_d \rangle} \quad \text{and} \quad
  C=\overline{\langle y_1,\ldots,y_s\rangle}.
  \]
  Since $K$ is a sub-direct product, we find
  $h_1^\ast,\ldots,h_d^\ast, y_1^\ast,\ldots, y_d^\ast \in K$ such
  that $h_i^\ast \pi_1 = h_i$ for $1 \le i \le d$ and
  $y_j^\ast \pi_C = y_j$ for $1 \le j \le s$.

  We observe that it suffices to show that, for each
  $j \in \{1,\ldots,m\}$, there exist an open subgroup
  $K_j \le_\mathrm{o} H_j$ with $K_j \le K$.  For then we get
  $\widetilde{K}= K_1 \times \cdots \times K_m \le_\mathrm{o} H$ and,
  setting $N = \lvert H : \widetilde{K} \rvert !$, we obtain
  $h^{N} \in \bigcap \{ \widetilde{K}^g \mid g \in H \} \subseteq
  \widetilde{K}$
  for every $h \in H$.  As $C$ is central, this implies
  $y_i^{N} = (y_i^\ast)^N ((y_i^\ast)^{-1} y_i)^N \in K$ for
  $1\le i\le s$, and we deduce from
  $\widetilde{K} \times \ol{\langle y_1^N,\ldots,y_s^N \rangle}
  \le_\mathrm{o} H \times C$ that $K \le_\mathrm{o} H\times C$.

  It remains to construct the aforementioned subgroups
  $K_j \le_\mathrm{o} H_j$ with $K_j \le K$ for $1 \le j \le m$.  By
  symmetry, it is enough to manufacture~$K_1$.  Indeed, we construct
  recursively, for each $1 \le i \le m$, a subgroup
  $K^{(i)} \le_\mathrm{c} K$ such that
  \[
  K^{(i)} \leq H_1 \times H_{i+1} \times H_{i+2} \times \cdots \times
  H_m \qquad \text{and} \qquad K^{(i)} \pi_1 \le_\mathrm{o} H_1.
  \]
  Then we take $K^{(m)}$ for $K_1$ and the proof is complete.

  Note that $K^{(1)} = [K,K]$ satisfies the relevant conditions,
  because $H_1 = K \pi_1$ is just infinite and non-abelian.  Now
  suppose that for $i \in \{2,\ldots,m\}$ the group $K^{(i-1)}$ is
  already available and build $K^{(i)}$ as follows.  Let $F = F_d$
  denote the free profinite group on $d$ generators $a_1,\ldots,a_d$
  and define profinite presentations
  \[
  1 \to R_i \rightarrow F \xrightarrow{\varphi_i} H_i \to 1, \qquad 1
  \le i \le m,
  \]
  with $a_j \varphi_1 = h_{j}$ for $1 \le j \le d$.

  Recall that $F/R_1\cong H_1$ and $F/R_i\cong H_i$ are non-isomorphic
  just infinite groups.  This gives $R_i \not \subseteq R_1$, and
  $R_iR_1/R_1$ is a non-trivial closed normal subgroup of
  $F/R_1 \cong H_1$.  This implies
  $R_i \varphi_1 \trianglelefteq_\mathrm{o} H_1$, and we obtain
  $R_i \varphi_1 \cap K^{(i-1)} \pi_1 \le_\mathrm{o} K^{(i-1)} \pi_1$.
  By Lemma~\ref{lem:virtuallyabelian}, there exists
  $x_{i-1} \in K^{(i-1)}$ such that
  $R_i \varphi_1 \cap K^{(i-1)} \pi_1 \not \le \mathrm{C}_{K^{(i-1)}
    \pi_1}(x_{i-1} \pi_1)$.

  Consequently, we find a word $w_i \in R_i$ such that
  $w_i(h_{1},\ldots,h_{d}) \notin \mathrm{C}_{K^{(i-1)} \pi_1}(x_{i-1}
  \pi_1)$.
  Using the properties of $K^{(i-1)}$ and
  $w_i \in \mathrm{ker}(\pi_i)$, it follows that
  $z_i = [w_i(h_1^\ast,\ldots,h_d^\ast),x_{i-1}] \in K$ satisfies
  \begin{equation}\label{eq:gamma}
    z_i \equiv b \pmod{H_{i+1} \times \cdots \times H_m}, \quad \text{where $1
      \ne b \in H_1$.}
  \end{equation}
  Set $K^{(i)} = \ol{\langle z_i \rangle^{K}} \le_\mathrm{c} K$.
  Visibly $z_i \in H_1 \times H_{i+1} \times \cdots \times H_m$, hence
  $K^{(i)} \leq H_1 \times H_{i+1} \times \cdots \times H_m$.
  Moreover, $K^{(i)} \pi_1 = \ol{\langle b \rangle^{H_1}}$ is a
  non-trivial closed normal subgroup of the just infinite group $H_1$,
  so $K^{(i)} \pi_1 \le_\mathrm{o} H_1$.
\end{proof}

\begin{ackn}
  A weak version of Theorem~\ref{thm:directprodhji} formed part of the
  second author's PhD thesis, Royal Holloway, University of London,
  2015. We thank the referee for encouraging us to state
  Proposition~\ref{prop:subdirect} as a separate result.
\end{ackn}



\end{document}